\newtheorem{theorem}{Theorem}[section]
\newtheorem{definition}[theorem]{Definition}
\newtheorem{lemma}[theorem]{Lemma}
\newtheorem{proposition}[theorem]{Proposition}
\newtheorem{remark}[theorem]{Remark}
\newcommand{\R}{\mathbb{R}}
\newcommand{\B}{\mathbb{B}}
\newcommand{\Z}{\mathbf{Z}}
\newcommand{\ZZ}{\mathbb{Z}}
\newcommand{\W}{\mathbf{W}}
\newcommand{\WW}{\mathbb{W}}
\title{Weak solutions of rough path SDE's via Girsanov}
\author{Torstein Nilssen \thanks{Institute of Mathematics, Technical University of Berlin, Germany, Financial support by the DFG via Research Unit FOR 2402 is gratefully acknowledged.} }
\date{}
\begin{document}

\maketitle

\begin{abstract}
We consider a differential equation driven by a Brownian motion as well as a rough path. We prove a Girsanov-type result for this equation to construct a weak solution in the probabilistic sense. 

\bigskip

MSC Classification Numbers: 60H05, 60H10.

Key words: Rough paths, weak solutions, Girsanov's theorem

\end{abstract}

\section{Introduction}

In this note we show a Girsanov type result to prove existence and uniquness of weak solutions 
of rough stochastic ordinary differential equations on the form
\begin{equation} \label{eq:SmoothSDE}
dX_t = u(t,X_t)dt + \sqrt{ 2 \nu}dB_t + \beta_j(X_t) \dot{Z}^j_t dt , \qquad X_0 = x \in \R^d
\end{equation}
where $u: [0,T] \times \R^d \rightarrow \R^d$ is a bounded and measurable vector field, $B$ is a Brownian motion, $\beta_j : \R^d \rightarrow \R^d$ are smooth vector fields and $Z$ can be lifted to a (deterministic) rough path. Above and below we use the convention of summation over repeated indices.

We motivate the study of this equation by the well known relation (at least in the case when $u$ and $Z$ are smooth) it has to the Kolmogorov equation
\begin{equation} \label{eq:Kolmogorov}
\partial_t \xi = \nu \Delta \xi + (u \nabla )\xi +    \beta_j \nabla \xi \dot{Z}_t^j 
\end{equation}
via the Feynman-Kac formula
\begin{equation} \label{eq:FK}
\xi(t,x) = E[ \xi_0(X_t^x)] ,
\end{equation}
which is classical for a smooth path $Z$. Moreover, this relation is shown to hold also when $Z$ is a rough path, as shown in \cite{DFS} and \cite{FNS}. 
See also the recent work \cite{HH} where an intrinsic notion of solution, as introduced in \cite{BG}, was used to show well-posedness of \eqref{eq:Kolmogorov} under optimal conditions on the coefficients when written in divergence form.

Understanding how to add the term $\beta_j \nabla \xi \dot{Z}_t^j$ could be motivated by the regularization by noise problem. Much attention has been given the degenerate version of \eqref{eq:Kolmogorov}, i.e. when $\nu = 0$, and $u$ is some irregular vector field. In particular \cite{FedrizziFlandoli} and \cite{MNP} proved well-posedness of the equation when $Z$ is a Brownian motion, $\beta_j(x) = e_j$ (i.e. the $j$th basis vector) and $u$ is allowed to be discontinuous. 
In these papers, the stochastic product was in the sense of Stratonovich, which will be the same as in the present paper by choosing a geometric rough path. 
Moreover, \cite{Catellier} and \cite{Nilssen} study a similar problem when $Z$ is a fractional Brownian motion.

To the best of the authors knowledge there has been no study of the corresponding diffusive equations, which in any case is well posed without noise in the linear case. However, by replacing $\xi$ by $u$ in \eqref{eq:FK} and solving \eqref{eq:SmoothSDE} and \eqref{eq:FK} as a \emph{system}, one can also find stochastic representations of nonlinear equations. Coupling this system also with the inverse of \eqref{eq:SmoothSDE}  and the Biot-Savart law to \eqref{eq:FK} the papers \cite{BarbaraBusnello} and \cite{BusnelloFlandoliRomito} show a stochastic representation of the Navier-Stokes equation, when $Z = 0$. In these papers, an integral part of the technique is the Girsanov transform.

\section{Notation and preliminary results}

\subsection{H\"{o}lder spaces and rough paths}

For a Banach space $E$, an integer $J \geq 1$, $\alpha >0$ and an interval $I \subset [0,T]$ denote by $C^{\alpha}_2(I ; E)$ the space of all continuous mappings $g : \{ (s,t) \in I^2 :  s \leq t  \} \rightarrow E$  such that $|g_{st}| \lesssim |t-s|^{\alpha}$ and denote by $[g]_{\alpha;I}$ the smallest constant satisfying this inequality. When $I = [0,T]$ we will simply write $[g]_{\alpha}$. 
For a path $f$ we will abuse notation and write $[f]_{\alpha}$ when we mean $[\delta f]_{\alpha}$ where $\delta f_{st} = f_t - f_s$.
We shall say that $\Z = (Z,\ZZ) \in C^{\alpha}_2([0,T] ; \R^J) \times C^{2 \alpha}_2([0,T] ; \R^{J \times J})$ with $\alpha \in (\frac13, \frac12)$ is a \emph{rough path} provided \emph{Chen's relation} holds, i.e.
\begin{equation} \label{chen}
\delta Z_{s \theta t} = 0 \quad \textrm{ and } \quad 
\delta \ZZ_{s \theta t}^{i,j} =  Z_{s \theta}^i Z_{\theta t}^j,
\end{equation}
where for a 2-index map $g$ we define $\delta g_{s \theta t} := g_{s t} - g_{\theta t} - g_{s \theta}$ for $s < \theta < t$. The left equality in \eqref{chen} implies that $Z$ is a path, and for simplicity we will assume that this path starts at $0$.
We denote by $\mathscr{C}^{\alpha}([0,T];\R^J)$ the set of all rough paths with topology induced by the metric of $C^{\alpha}_2([0,T] ; \R^J) \times C^{2 \alpha}_2([0,T] ; \R^{J \times J})$. A rough path $\Z$ is said to be \emph{geometric} provided there exists a sequence of smooth paths $Z^n :[0,T] \rightarrow \R^J$ such that $\Z^n = (Z^n, \ZZ^n) \rightarrow \Z$
where we have defined $\ZZ_{st}^{n,i,j}  = \int_s^t \delta Z_{ s r}^{n,i} \dot{Z}^{n,j}_r dr $.

The \emph{sewing lemma} (see \cite[Lemma 4.2]{FH14}) tells us that given a 2-index map $g$ such that $|\delta g_{s \theta t} | \leq |t-s|^{\zeta}$ for some $\zeta >1$, there exists a unique pair $( I(g), I^{\natural} (g))$ such that 
$$
\delta I(g)_{st}   = g_{st} + I^{\natural}(g)_{st}
$$
where $I(g)$ is a path and $|I^{\natural}(g)_{st}| \leq C_{\zeta} |t-s|^{\zeta}$ where $C_{\zeta}$ depends only on $\zeta$. The mapping $I$ is linear in $g$, and so easily extends to the case $|\delta g_{s \theta t} | \lesssim |t-s|^{\zeta}$.

\subsection{Girsanov's theorem and weak solutions} \label{section:Girsanov}

We recall the Girsanov change of measure. Let $(\Omega, \mathcal{F},P)$ be a probability space with a $d$-dimensional Brownian motion $(B_t, \mathcal{F}_t)_{t \in [0,T]}$. If $v \in L^{\infty}([0,T] \times \Omega; \R^d)$ is adapted to $\mathcal{F}_t$, the 
\emph{Girsanov theorem} (see \cite[Theorem 3.5.1]{KaratzasShreve}) states that if we define
$$
dQ = \exp \left( - \int_0^T v_s^j dB^j_s - \frac12 \int_0^T |v_s|^2 ds \right)  dP ,
$$
then $Y_t := B_t + \int_0^t v_s ds$ is a Brownian motion on $(\Omega, \mathcal{F},Q)$.

This leads to the construction of solutions to the equation
\begin{equation} \label{eq:BMplusDrift}
dX_t = b(t,X_t) dt + B_t,
\end{equation}
as follows. 
Let $(\Omega, \mathcal{F},P)$ be a probability space with a Brownian motion $( \tilde{B}_t, \mathcal{F}_t)_{t \in [0,T]}$ and assume $b \in L^{\infty}([0,T] \times \R^d ; \R^d)$. 
Define the probability measure 
$$
dQ := \exp \left(  \int_0^T b_j(s,x + \tilde{B}_s) d\tilde{B}^j_s - \frac12 \int_0^T |b(s,x + \tilde{B}_s)|^2 ds \right) dP.
$$
Then $Q$ is such that $B_t :=  \tilde{B}_t - \int_0^t b(s,x + \tilde{B}_s) ds$ is a Brownian motion on $(\Omega, \mathcal{F},Q)$. 
If we define $X_t := x + \tilde{B}_t$ we see that $X$ satisfies \eqref{eq:BMplusDrift}.

We note that the filtration generated by the Brownian motion, $(\mathcal{F}^B_t)_{t \in [0,T]}$, is by construction contained in $(\mathcal{F}^X_t)_{t \in[0,T]} =  (\mathcal{F}^{\tilde{B}}_t)_{t \in[0,T]}$ and we call such a solution a \emph{weak solution}.
A priori it is not clear that if these filtrations actually coincide, but this has been shown to be true under very general assumptions on the drift, see e.g. \cite{Zvonkin} and \cite{Veretennikov}. We conjecture that similar results will hold for the equation \eqref{MainEq}.

On the other hand, if we include a diffusion coefficient, one can easily find examples for which we have $\mathcal{F}^B \neq \mathcal{F}^{\tilde{B}}$, e.g. the Tanaka equation, see \cite[Example 5.3.5]{KaratzasShreve}.


\section{Formal computations}

For simplicity we assume $\nu = \frac12$, and for the rest of the paper we want to study the equation
\begin{equation} \label{MainEq}
dX_t = u(t,X_t) dt + \beta_j(X_t) d\Z_t^j + dB_t, \qquad X_0 = x \in \R^d
\end{equation}
where $\Z$ is a rough path, $u$ is a bounded function and $B$ is a Brownian motion w.r.t. some probability space.

\subsection{A singular measure approach} \label{section:SingularMeasureFormal}

We first try to copy the approach in Section \ref{section:Girsanov} directly to construct a weak solution.

Assume for simplicity $J=d=1, x=0$, $\beta = 1$ and $Z$ is smooth, so that we are looking to construct a weak solution to
$$
dX_t = u(t,X_t)dt + dB_t +  dZ_t , \qquad X_0 = 0 
$$

Let $\tilde{B}$ be a Brownian motion on $(\Omega, \mathcal{F}, P)$. If we define 
$$
B_t := \tilde{B}_t - \int_0^t u(s,\tilde{B}_s) ds - Z_t,
$$
the next step is to change the measure such that $B$ is a Brownian motion under some new probability measure. 
The Girsanov change of measure reads
$$
dQ = \exp \left( \int_0^T (u(t, \tilde{B}_t ) + \dot{Z}_t) d \tilde{B}_t  - \int_0^T \left|  u(t, \tilde{B}_t ) + \dot{Z}_t   \right|^2  dt \right) dP.
$$

To illustrate the idea, assume that $u = 0$, and define
$$
M_T(Z) := \exp \left( \int_0^T  \dot{Z}_t d \tilde{B}_t - \frac12 \int_0^T |\dot{Z}_t|^2  dt \right)  
$$

Since $Z$ is deterministic, we get 
$$
E[M_T(Z)^p] = \exp \left( \frac{p}{2} (p - 1) \int_0^T |\dot{Z}_t|^2 dt \right) ,
$$
so that if we approximate a truly rough path $Z$, in the sense that $\int_0^T |\dot{Z}_s|^2 ds = \infty$, by a sequence of smooth paths $Z^n$ we get 
$$
\lim_{n \rightarrow \infty} E[M_T(Z^n)^p] = \left\{ \begin{array}{ll}
1 & \textrm{ if } p=1 \\
\infty & \textrm{ if } p > 1 \\
\end{array} \right. .
$$

This means that there is no hope in taking a strong limit of $M(Z^n)$ or even a weak limit in $L^p(\Omega,\mathcal{F},P)$ for $p >1$. Instead, let us try to take a weak limit of the induced measure $dQ^n = M_T(Z^n)dP$. 
Without loss of generality we can assume $\Omega = C([0,T];\R^d)$, $\mathcal{F}$ is the Borel $\sigma$-algebra and $P$ is the Wiener measure. 
Take an open set $A \subset C([0,T];\R^d)$. Then 
$$
  Q^n (A) = P( \omega + Z^n \in A) \rightarrow P(\omega + Z \in A) =: Q(A)
$$
However, by assumption $Z$ does not belong to the Cameron-Martin space of $P$
and so $Q$ can not be absolutely continuous w.r.t. to the Wiener-measure. 

Even worse, even if $B^n(\omega) = \omega + Z^n$ is a Brownian motion w.r.t. $Q^n$ for every $n$ and we have strong (respectively weak) convergence to $B(\omega) = \omega + Z$ (respectively $Q$) the limiting process is in general not a Brownian motion. In fact, assume $Z$ is a path which is truly rougher than the Brownian paths, e.g. the sample path of a fractional Brownian motion with Hurst parameter $H< 1/2$. Then, if $B$ was a Brownian motion w.r.t. $Q$, its sample paths could be chosen to be H\"{o}lder continuous with an exponent strictly bigger than $H$, giving a contradiction.

\subsection{An equivalent measure approach} \label{section:formalComputation}

To construct a weak solution with an equivalent measure we do the following. 

Let $\tilde{B}$ be a Brownian motion on $(\Omega, \mathcal{F}, P)$. Solve the equation 
\begin{equation} \label{eq:WeakSolutionEquation}
dX_t = d \tilde{B}_t + \beta_j(X_t) d\Z^j_t 
\end{equation}
%
and define
$$
B_t = \tilde{B}_t - \int_0^t u(s,X_s) ds.
$$
%
If $X$ is adapted to the filtration generated by $\tilde{B}$ we may construct the It\^{o} integral $\int_0^T u(s,X_s) d \tilde{B}_s$. If $u$ is bounded 
then it is clear that 
$$
dQ :=  \exp \left( \int_0^T u_j(t, X_t )  d \tilde{B}^j_t  - \frac12 \int_0^T |u(t,X_t)|^2 dt \right) dP.
$$
is such that $B$ is a Brownian motion w.r.t $(\Omega, \mathcal{F}, Q)$. Moreover, by the definition of $X$ we have
\begin{equation} \label{eq:FormalDerivatives}
d B_t = d\tilde{B}_t - u(t,X_t) dt = dX_t - \beta_j(X_t) d\Z^j_t - u(t,X_t) dt 
\end{equation}
which shows that $X$ is a weak solution.

\bigskip

The expert reader will notice that solving the equation \eqref{eq:WeakSolutionEquation} and the computation \eqref{eq:FormalDerivatives} needs extra care to be done rigorously in the rough path setting. 

\section{Main results}

The rest of the paper is devoted to making the computations in Section \ref{section:formalComputation} rigorous. We start by introducing the correct notion of a solution to \eqref{MainEq}.

\subsection{Rough solutions}
Let us first consider the equation \eqref{MainEq} without the Brownian motion, i.e. 
\begin{equation} \label{eq:NoBM}
dX_t  = u(X_t) dt + \beta_j(X_t) d\Z^j_t ,
\end{equation}
where $\Z$ is a rough path. 
One way of doing this is to transform the rough path $\Z$ into a rough path containing the drift term,
\begin{equation} \label{eq:RPplusTime}
\hat{Z}_t = \left( \begin{array}{c}
t \\
Z_t \\
\end{array} \right) 
 \quad  \textrm{ and }\quad
\hat{\ZZ}_{st} = \left( \begin{array}{cc}
\frac12 (t-s)^2 & \int_s^t \delta Z_{sr} dr \\
\int_s^t (r-s) dZ_r & \ZZ_{st}  \\
\end{array} \right)
\end{equation}
where all the above terms are well defined since $t \mapsto t$ is smooth, and then define $V_0(x) = u(x)$, $V_j(x) = \beta_j(x)$ for $j=1, \dots , J$. One could then solve the equation
\begin{equation*} 
dX_t  = V_j(X_t) d\hat{\Z}^j_t 
\end{equation*}
in the rough path sense.

This would however require higher regularity of the coefficient $u$ than what is the aim of this paper. 
Exploiting the original structure of the equation \eqref{eq:NoBM} is also done in \cite{FO09} where the authors show the well posedness of the equation for $u \in C^1_b(\R^d)$.

We introduce the notion of a pathwise solution as first defined by Davie in \cite{bib:davie}. The only difference is that we allow for a drift term. 
\begin{definition} \label{def:RPsolution}
A path $X :[0,T] \rightarrow \R^d$ is said to be a solution to \eqref{eq:NoBM} provided 
\begin{equation} \label{eq:remainder}
X_{st}^{\natural} :=  \delta X_{st} - \int_s^t u(r, X_r) dr  - \beta_j(X_s) \delta Z_{st}^j - \nabla  \beta_j(X_s) \beta_i(X_s) \ZZ_{st}^{i,j}
\end{equation}
is a \emph{remainder}, i.e. $|X_{st}^{\natural}| \lesssim |t-s|^{\zeta}$ for some $\zeta > 1$. 

\end{definition} 

\begin{remark}
The terms $\beta_j(X_s) \delta Z_{st}^j + \nabla \beta_j(X_s) \beta_i(X_s) \ZZ_{st}^{i,j} +X_{st}^{\natural}$ represent the rough path integral $\int_s^t \beta_j(X_r) d\Z_r$. In fact, from the sewing lemma, there exists a path $I(g)$ representing the integral of the local expansion
$$
g_{st} := \beta_j(X_s) \delta Z_{st}^j + \nabla\beta_j(X_s) \beta_i(X_s) \ZZ_{st}^{i,j} ,
$$
where one can use \eqref{chen} and the assumptions on $\beta_j$ to check that $|\delta g_{s \theta t} | \lesssim |t-s|^{3 \alpha}$ (recall that $3\alpha >1$ by assumption).
From the uniqueness in the sewing lemma, if $X_{st}^{\natural}$ is a remainder, it is clear that 
$$
\delta X_{st} - \int_s^t u(r,X_r) dr  =  \delta I(g)_{st} =:  \int_s^t \beta_j(X_r) d\Z_r . 
$$
\end{remark}

The main result we shall need on rough path differential equations is the following result when $u = 0$, see \cite{FH14} for a proof and a nice introduction to rough path theory.

\begin{theorem} \label{thm:RDEwellPosed}
Assume $\beta_j \in C^3_b(\R^d)$ and $\Z$ is a rough path. Then there exists a unique solution to 
$$
dX_t = \beta_j(X_t)d\Z^j_t, \qquad X_0 = x \in \R^d .
$$
Moreover, the mapping $\Z \mapsto \delta X$ is locally Lipschitz from $\mathscr{C}^{\alpha}([0,T];\R^J)$ to $C^{\bar{\alpha}}_2([0,T];\R^d)$ for all $\bar{\alpha} < \alpha$.
\end{theorem}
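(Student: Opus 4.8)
The plan is to run the by-now-standard Banach fixed-point argument for rough differential equations in the space of paths controlled by $Z$, as developed in \cite{FH14}; since $u=0$ here, there is no drift to carry along. First I would fix a horizon $\tau \in (0,T]$ and work in the space $\mathscr{D}^{2\alpha}_Z([0,\tau];\R^d)$ of pairs $(Y,Y')$ with $Y_0 = x$, $Y' \in C^{\alpha}$ and remainder $R^Y_{st} := \delta Y_{st} - Y'_s\,\delta Z_{st} \in C^{2\alpha}_2$, normed by $|Y'_0| + [Y']_{\alpha;[0,\tau]} + [R^Y]_{2\alpha;[0,\tau]}$. Two analytic inputs are needed. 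The first is that if $\beta \in C^3_b$ and $(Y,Y')$ is controlled by $Z$, then $\big(\beta_j(Y),\ \nabla\beta_j(Y)\,Y'\big)$ is again controlled by $Z$, with controlled-path norm bounded by a polynomial in $\|Y'\|_\infty + [Y']_{\alpha} + [R^Y]_{2\alpha}$ and $[Z]_{\alpha}$; here $\beta\in C^2_b$ already suffices for the norm bound, while the full $C^3_b$ is what makes this composition map locally Lipschitz, which is used both for the contraction below and for the stability statement. The second is that the rough integral $\int_0^\cdot \beta_j(Y)\,d\Z^j$ is well defined by the sewing lemma applied to the germ $g_{st} = \beta_j(Y_s)\,\delta Z^j_{st} + \big(\nabla\beta_j(Y)\,Y'\big)_s\,\ZZ^{i,j}_{st}$ (summation over $i,j$) — Chen's relation \eqref{chen} and the regularity of $\beta$ give $|\delta g_{s\theta t}| \lesssim |t-s|^{3\alpha}$, with $3\alpha > 1$ — and that $\big(\int_0^\cdot\beta_j(Y)\,d\Z^j,\ \beta(Y)\big)$ is again controlled by $Z$, with norm bounded by the data times a positive power of $\tau$.

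With these in hand I would define $\mathcal{M}(Y,Y') := \big(x + \int_0^\cdot \beta_j(Y)\,d\Z^j,\ \beta(Y)\big)$ on a closed ball of $\mathscr{D}^{2\alpha}_Z([0,\tau];\R^d)$ and check, using the estimates above, that for $\tau$ sufficiently small — with the smallness depending only on $\|\beta\|_{C^3_b}$ and on $[Z]_{\alpha} + [\ZZ]_{2\alpha}$, and crucially \emph{not} on $x$ — the map sends the ball into itself and is a contraction. Banach's fixed-point theorem then gives a unique controlled solution on $[0,\tau]$; it solves \eqref{eq:NoBM} in the sense of Definition \ref{def:RPsolution} with $u=0$ because $\int \beta(Y)\,d\Z = I(g)$ by uniqueness in the sewing lemma, so that the object defined in \eqref{eq:remainder} equals $I^{\natural}(g)$ and is therefore a remainder. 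Since $\tau$ is independent of the starting point and $\beta$ is bounded there is no blow-up, and one concatenates $\lceil T/\tau \rceil$ local solutions into a solution on all of $[0,T]$, along which uniqueness propagates.

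For the continuity statement I would fix a ball of $\mathscr{C}^\alpha$ containing both $\Z$ and $\tilde\Z$, with solutions $X$ and $\tilde X$, and compare the controlled rough paths $(X,\beta(X))$ over $Z$ with $(\tilde X,\beta(\tilde X))$ over $\tilde Z$ in the natural distance between controlled rough paths expanded along \emph{different} reference signals, which controls $\|X-\tilde X\|_\infty$, $\|\beta(X)-\beta(\tilde X)\|_\infty$ and $[R^X - R^{\tilde X}]_{2\alpha}$. Stability of the sewing lemma together with the local Lipschitzness of the composition map — the latter now genuinely using $\beta\in C^3_b$, so that $\nabla^2\beta$ is Lipschitz — feed into the very same contraction estimate as above, run on the difference with a source term proportional to the distance $\varrho_\alpha(\Z,\tilde\Z)$ in $\mathscr{C}^\alpha$; iterating across the concatenation then gives $[\delta X - \delta\tilde X]_{\alpha} \lesssim \varrho_\alpha(\Z,\tilde\Z)$ with constant depending only on the ball. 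Since on $[0,T]$ one has $[g]_{\bar\alpha} \le T^{\alpha-\bar\alpha}[g]_{\alpha}$, local Lipschitz continuity into $C^{\alpha}_2$ is a fortiori local Lipschitz continuity into $C^{\bar\alpha}_2$ for every $\bar\alpha < \alpha$.

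The step I expect to be the main obstacle is this stability estimate: comparing two controlled rough paths built over two different driving signals is the delicate point, since one cannot simply subtract Gubinelli derivatives, and one must track simultaneously the differences of the remainders, of the compositions with $\beta$, and of the sewn integrals within a single Gronwall-type iteration — which is exactly where the extra derivative in the hypothesis $\beta\in C^3_b$ is spent.
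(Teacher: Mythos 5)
The paper does not prove Theorem \ref{thm:RDEwellPosed} itself but quotes it from \cite{FH14}, and your outline is precisely the argument given there: a Banach fixed point in the space of $Z$-controlled rough paths on a short interval whose length is independent of the initial condition (thanks to $\beta_j$ bounded with bounded derivatives), followed by concatenation, with the local Lipschitz continuity of the It\^o--Lyons map obtained from the stability of the sewing lemma and of composition with $C^3_b$ maps measured in the distance between controlled paths over different reference signals. The sketch is correct, including the correct accounting of where $C^2_b$ suffices and where $C^3_b$ is genuinely used.
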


\bigskip

As explained in the introduction, we want to consider the equation
$$
dX_t = u(t,X_t) dt + \beta_j(X_t) d\Z^j_t + dB_t.
$$ 
To understand what a notion of a solution to this equation should be, assume for simplicity that $u=0$. Motivated by Definition \ref{def:RPsolution}, one could be tempted to say that a solution is a function $X$ such that 
$$
X_{st}^{\natural} :=  \delta X_{st} - \delta B_{st}  - \beta_j(X_s) \delta Z_{st}^j - \nabla \beta_j(X_s) \beta_i(X_s) \ZZ_{st}^{i,j} 
$$
is a remainder. 
This definition, however, does not contain "area" between $Z$ and $B$ and is thus not suitable. More specifically, one can check that the local expansion
$$
g_{st} :=   \delta B_{st}  + \beta_j(X_s) \delta Z_{st}^j + \nabla \beta_j(X_s) \beta_i(X_s) \ZZ_{st}^{i,j}
$$
does in general \emph{not} satisfy $| \delta g_{s \theta t} | \lesssim |t-s|^{\zeta}$ for any $\zeta > 1$, so that the sewing lemma does not apply. This is not in conflict with Definition \ref{def:RPsolution} since there the drift term is of bounded variation, which is related to the fact that the area in \eqref{eq:RPplusTime} is canonically defined.

Moreover, with a view towards adding a diffusion-coefficient to study the equation 
$$
dX_t = u(t,X_t) dt + \beta_j(X_t) d\Z^j_t + \sigma_k(X_t) dB^k_t,
$$
one really needs to understand $(B,Z)$ as a rough path. This is done in the next section. 

\subsection{Joint lift} \label{section:JointLift}

Let $\Z = (Z, \ZZ)$ be a geometric rough path. Given a Brownian motion $B$ on some probability space $(\Omega, \mathcal{F}, P)$, we may construct $ \int_s^t \delta Z_{sr} dB_r$ as the Wiener-It\^{o} integral, in particular an element of $L^2(\Omega, \mathcal{F},P; \R^{J \times d})$ such that 
$$
E \left[ \left| \int_s^t \delta Z_{sr} dB_r \right|^2 \right] =    \int_s^t  |\delta Z_{sr}|^2 dr .
$$
The above right hand side can be bounded by $[Z]_{\alpha} \int_s^t |r-s|^{2 \alpha} dr \lesssim  |t-s|^{2 \alpha + 1}$.

Moreover, since $Z$ is deterministic, $\int_s^t \delta Z_{sr} dB_r$ is a Gaussian random variable, and by equivalence of moments and the Kolmogorov theorem for 2-index maps (see \cite[Theorem 3.1]{FH14}) we get that there exists a set $N \in \mathcal{F}$ such that $P(N) = 0$ and for all $\omega \in N^c$ we have $ \left| \int_s^t \delta Z_{sr} dB_r (\omega) \right| \leq K_{\alpha}(\omega) |t-s|^{2 \alpha}$ for some $K_{\alpha} \in L^p(\Omega, \mathcal{F},P)$. 
Define now the rough path $\tilde{\Z} = \tilde{\Z}(B)$ by

$$
\tilde{Z}_t = \left( \begin{array}{c}
B_t \\
Z_t \\
\end{array} \right) 
 \quad  \textrm{ and }\quad
\tilde{\ZZ}_{st} = \left( \begin{array}{cc}
\mathbb{B}_{st} & \int_s^t \delta Z_{sr} dB_r \\
\int_s^t \delta B_{sr} dZ_r & \ZZ_{st}  \\
\end{array} \right)
$$
on the set $\Omega_0 := \{  \delta B_{\cdot} \in C^{\alpha}_2([0,T]; \R^d) \} \cap \{ \mathbb{B} \in C_2^{2 \alpha}([0,T]; \R^{d \times d}) \} \cap N^c$, which has full $P$-measure. Above we have defined $\mathbb{B}_{st} = \int_s^t \delta B_{sr} dB_r$ and 
$$
\int_s^t \delta B_{sr} dZ_r :=  \delta B_{st} \delta Z_{st}  - \int_s^t \delta Z_{sr} dB_r 
$$
so that we have $| \int_s^t \delta B_{sr} dZ_r | \leq |t-s|^{2 \alpha}([B]_{\alpha} [Z]_{\alpha} + K_{\alpha} [Z]_{\alpha}) $ on $\Omega_0$. 

It is easy to check that $\tilde{\Z}$ is a rough path. Moreover, it is shown in \cite{DOR} that the mapping
\begin{align*}
\mathscr{C}^{\alpha}([0,T];\R^J) & \rightarrow L^p(\Omega, \mathcal{F}, P; \mathscr{C}^{\bar{\alpha}}([0,T];\R^{d+J}) )  \\
\Z & \mapsto \tilde{\Z} 
\end{align*}
is locally Lipschitz for any $\bar{\alpha} < \alpha$.

\begin{remark}
From \cite[Theorem 3.1]{FH14} it is clear that one can choose $\int_{\cdot}^{\cdot} \delta Z_{\cdot r} dB_r \in C^{\gamma}_2([0,T]; \R^{J+d})$ for $\gamma \in (0, 2 \alpha +1)$. 
\end{remark}

\begin{remark}
We did not specify which type of integration we used for the definition of $\mathbb{B}$ since we are considering constant diffusion vector fields. It can be checked (and is in fact spelled out in \eqref{eq:expansion}) that the solution is independent of this choice.  
\end{remark}

\subsection{Pathwise weak solutions}

Using the previous section we are able to define the notion of a weak solution of \eqref{MainEq}.

\begin{definition}
Given $u$ and the rough path $\Z$, we say that a probability space $(\Omega, \mathcal{F}, P)$ supporting a Brownian motion $B$ and a stochastic process $X :[0,T] \times \Omega \rightarrow \R^d$ is a \emph{weak solution} to \eqref{MainEq} provided there exists a set of full $P$-measure where 
\begin{equation} \label{eq:RoughSDE}
dX_t = u(t,X_t) dt + V_j(X_t) d \tilde{\Z}(B)^j_t , \quad X_0 = x \in \R^d
\end{equation}
in the sense of Definition \ref{def:RPsolution}. Above we have defined the joint lift $\tilde{\Z}(B)$ as in Section \ref{section:JointLift} and we have defined $V_j = e_j$ for $j=1, \dots, d$ and $V_j = \beta_j$ for $j = d+1, \dots, d + J$. 

\end{definition}

\begin{remark}
Notice that the filtration in the above definition is in general not generated by the Brownian motion $B$.
\end{remark}

With a proper definition of a solution in place, we go on to prove existence and uniqueness in law.

\subsection{Existence}

\begin{proposition} \label{proposition}

Let $(\Omega, \mathcal{F}, P)$ be a probability space with a Brownian motion $(B_t, \mathcal{F}_t)_{t \in [0,T]}$. Suppose $\Z$ is a geometric rough path and $\beta_j \in C^3_b(\R^d)$ for all $j$. Then there exists a set $\Omega_0$ of full $P$ measure such that for all $\omega \in \Omega_0$ there exists a unique solution to  
\begin{equation} \label{eq:PureRoughEq}
dX_t(\omega) = V_j(X_t(\omega)) d \tilde{\Z}(B)_t^j(\omega), \quad X_0 = x \in \R^d
\end{equation}
where $V$ and $\tilde{\Z}(B)$ are as defined in the previous section. Moreover, the resulting stochastic process $X$ has a modification which is $\mathcal{F}_t$-adapted.
\end{proposition}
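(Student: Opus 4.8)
The plan is to combine the deterministic well-posedness result for rough differential equations (Theorem \ref{thm:RDEwellPosed}) with the measurability structure coming from the joint lift construction of Section \ref{section:JointLift}. First I would observe that equation \eqref{eq:PureRoughEq} has no genuine drift: it is of the pure form $dX_t = V_j(X_t)\,d\tilde{\Z}(B)_t^j$ with $V_j = e_j$ (constant, hence in $C^3_b$) for $j \le d$ and $V_j = \beta_j \in C^3_b(\R^d)$ for $j > d$ by hypothesis. On the set $\Omega_0$ of full $P$-measure constructed in Section \ref{section:JointLift} — namely where $\delta B \in C^{\alpha}_2$, $\mathbb{B} \in C^{2\alpha}_2$, and the Wiener--It\^o integral $\int_s^t \delta Z_{sr}\,dB_r$ obeys the Kolmogorov bound — the pair $\tilde{\Z}(B)(\omega)$ is a genuine $\R^{d+J}$-valued rough path in $\mathscr{C}^{\alpha}$. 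Therefore Theorem \ref{thm:RDEwellPosed}, applied pathwise with the enlarged driver $\tilde{\Z}(B)(\omega)$ and coefficient vector field $(V_1,\dots,V_{d+J})$, yields for each fixed $\omega \in \Omega_0$ a unique solution path $X_{\cdot}(\omega)$ to \eqref{eq:PureRoughEq}, in the sense of Definition \ref{def:RPsolution} with $u = 0$. This disposes of existence and uniqueness of the path for every $\omega \in \Omega_0$.

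The remaining assertion is the existence of an $\mathcal{F}_t$-adapted modification. The natural route is to use the continuity statement from Theorem \ref{thm:RDEwellPosed} — the solution map $\Z \mapsto \delta X$ is locally Lipschitz (hence continuous) from $\mathscr{C}^{\alpha}$ to $C^{\bar\alpha}_2$ — together with measurability of the lift map $\omega \mapsto \tilde{\Z}(B)(\omega)$. Concretely, I would first establish that $\omega \mapsto \tilde{\Z}(B)(\omega) \in \mathscr{C}^{\bar\alpha}([0,T];\R^{d+J})$ is $\mathcal{F}$-measurable: the components $\delta B$, $\mathbb{B}$ are classical (the latter an It\^o or Stratonovich integral, both $\mathcal{F}$-measurable as limits of Riemann sums), while the cross term $\int_s^t \delta Z_{sr}\,dB_r$ is a Wiener--It\^o integral against $B$, so it too is $\mathcal{F}$-measurable, and by the Kolmogorov/DOR argument the whole object lands in the Polish space $\mathscr{C}^{\bar\alpha}$ measurably. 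Composing this measurable map with the continuous solution map from Theorem \ref{thm:RDEwellPosed} gives that $\omega \mapsto \delta X_{\cdot}(\omega) \in C^{\bar\alpha}_2$, and hence $\omega \mapsto X_t(\omega)$, is $\mathcal{F}$-measurable for each $t$.

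To upgrade from $\mathcal{F}$-measurability to $\mathcal{F}_t$-adaptedness, I would exploit the fact that rough differential equations are causal: the solution $X_t$ depends on the driver only through its restriction to $[0,t]$, and the joint lift $\tilde{\Z}(B)|_{[0,t]}$ is built from $B|_{[0,t]}$ and the deterministic path $Z|_{[0,t]}$ alone. More precisely, restricting all the sewing-lemma constructions to the subinterval $[0,t]$ shows that $X_t$ is a measurable function of $(B_s)_{s\le t}$, i.e.\ it is $\mathcal{F}^B_t$-measurable, a fortiori $\mathcal{F}_t$-measurable. Since for each fixed $t$ we only need a version of the random variable $X_t$, and we already have such a version that is $\mathcal{F}_t$-measurable, the collection $\{X_t\}_{t\in[0,T]}$ together with the already-known (almost sure) $\bar\alpha$-H\"older continuity in $t$ provides the required adapted modification. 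The main obstacle I anticipate is the bookkeeping in this last step: one must check carefully that the restriction of the lift to $[0,t]$, the restricted sewing lemma, and the restricted solution map are all compatible with the full-interval constructions (so that the restricted solution genuinely agrees with $X_t$), and that the null set $N$ from the Kolmogorov argument can be chosen uniformly so as not to destroy adaptedness; the pure rough path analysis itself is immediate from Theorem \ref{thm:RDEwellPosed}.
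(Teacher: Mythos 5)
Your proposal is correct, and the first half (pathwise existence and uniqueness on the full-measure set $\Omega_0$ where $\tilde{\Z}(B)(\omega)$ is a rough path, via Theorem \ref{thm:RDEwellPosed} applied to the enlarged driver with coefficients $V_j \in C^3_b$) coincides with the paper's argument. Where you genuinely diverge is the adaptedness step. The paper proves an $L^p$ local Lipschitz estimate for the composed map $\Z \mapsto \tilde{\Z}(B) \mapsto \delta X^{\Z}$ (Lemma \ref{lemma:RPcont}, via stopping times and monotone convergence), then exploits the \emph{geometricity} of $\Z$: it takes smooth approximations $Z^n$, for which the equation is a classical SDE whose solution $X^n$ is $\mathcal{F}_t$-adapted by standard theory, extracts an a.s.\ convergent subsequence $X^n_t \to X_t$, and concludes since measurability passes to limits. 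You instead argue directly on the rough equation: measurability of $\omega \mapsto \tilde{\Z}(B)(\omega)$ into the Polish space $\mathscr{C}^{\bar\alpha}$, composition with the continuous It\^o--Lyons map, and then a causality/restriction argument — the lift and the solution on $[0,t]$ are built from $B|_{[0,t]}$ alone, so $X_t$ is $\mathcal{F}^B_t$-measurable up to null sets. Both routes are sound. Yours has the advantage of not using the geometric hypothesis at all for this step (and so would extend to non-geometric $\Z$), at the cost of having to verify the restriction/compatibility bookkeeping you flag (consistency of the Kolmogorov modification of $\int \delta Z\, dB$ across subintervals, and working with the null-set--augmented filtration, which is what the word ``modification'' in the statement permits). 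The paper's route outsources adaptedness entirely to classical SDE theory and pays for it with the $L^p$ stability lemma and the passage to a subsequence.
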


\begin{proof}

Let $\Omega_0$ be as in the definition of $\tilde{\Z}(B)(\omega)$, i.e. such that $\tilde{\Z}(B)(\omega)$ is a rough path. Existence and uniqueness of a rough path solution to \eqref{eq:PureRoughEq} is then classical under the assumption $V_j \in C_b^3(\R^d)$. This is clearly true when $\beta_j \in C^3_b(\R^d)$.

To prove that the solution is adapted, we shall use a more general result from the following lemma which is also proved in \cite[Theorem 8]{DOR}.

\begin{lemma} \label{lemma:RPcont}
For any $p \in [1,\infty)$ the mappings 
\begin{equation} \label{diagram}
\begin{array}{ccccc}
\mathscr{C}^{\alpha}([0,T]; \R^J) &  \rightarrow  & L^p(\Omega, \mathcal{F},P; \mathscr{C}^{\bar{\alpha}}([0,T]; \R^{d+J}))  & \rightarrow &  L^p(\Omega, \mathcal{F},P; C_2^{\bar{\bar{\alpha}}}([0,T]; \R^d)) \\
 \Z & \mapsto  & \tilde{\Z}(B) &  \mapsto & \delta X^{\Z} ,
\end{array}
\end{equation}
are locally Lipschitz for $\bar{\bar{\alpha}} < \bar{\alpha} < \alpha$. 
\end{lemma}

\begin{proof}
For a bounded path $Z$ we define the stopping times 
$$
\tau_n^{Z} = \inf \left\{ t > 0 :   [ B]_{ \alpha ; [0,t]}  \geq n , [ \B ]_{ 2 \alpha ; [0,t]} \geq n \textrm{ or } [ \int \delta Z_{\cdot r} dB_r ]_{2 \alpha; [0,t]} \geq n, \right\}  .
$$
Restricting to the time interval $\left[0,\tau_n^{Z}\right]$ we have $[\tilde{Z}(B)]_{\alpha; [0,\tau_n^{Z}] } + [\tilde{\ZZ}(B)]_{2 \alpha ;[0,\tau_n^{Z}]} \lesssim n
$ and $P$-a.s. we have $ \lim_{n \rightarrow \infty} \tau_n^Z = T$. 

Let $R> 0$ and $\Z, \W \in \mathscr{C}^{\alpha}([0,T] ; \R^J)$ belong to the ball of radius $R$.
From Theorem \ref{thm:RDEwellPosed} we get the restriction to the time interval $[0, \tau_n^Z \wedge \tau_n^W]$
$$
[ X^{\Z} -  X^{\W} ]_{ \bar{\bar{\alpha}}; [0, \tau_n^Z \wedge \tau_n^W]} \leq C_R \left( [\tilde{Z}(B) - \tilde{W}(B)]_{ \bar{\alpha};[0, \tau_n^Z \wedge \tau_n^W]} + [\tilde{\ZZ}(B) - \tilde{\WW}(B)]_{ 2 \bar{\alpha} ; [0, \tau_n^Z \wedge \tau_n^W]} \right)
$$
where we have chosen $n \gtrsim R$, and we remark that $C_R$ is deterministic. Integrated to the $p$th power and letting $n \rightarrow \infty$ we get by monotone convergence 
\begin{align*}
\left\| [ X^{\Z} -  X^{\W} ]_{ \bar{\bar{\alpha}}} \right\|_{L^p(\Omega,\mathcal{F},P)} &  \leq C_R \left\| [\tilde{Z}(B) - \tilde{W}(B)]_{ \bar{\alpha}} + [\tilde{\ZZ}(B) - \tilde{\WW}(B)]_{ 2 \bar{\alpha} } \right\|_{L^p(\Omega,\mathcal{F},P)} \\
&  \leq C_{R,p}  ( [Z - W]_{ \alpha } + [\ZZ - \WW]_{ 2 \alpha } ) 
\end{align*}
where we have used \cite[Theorem 3]{DOR} in the last step. 
\end{proof}


To finalize the proof of Proposition \ref{proposition} we need to show that there exists a modification of $X$ which is $\mathcal{F}_t$-adapted. Using Lemma \ref{lemma:RPcont} this is straightforward. Indeed, let $Z^n$ be a sequence of smooth paths such that $\Z^n$ converges to $\Z$ in the rough path topology. For every $n$, there exists a unique stochastic process such that
$$
dX_t^n = dB_t + \beta(X_t^n) \dot{Z}_t^n dt,
$$
and $X_t^n$ is $\mathcal{F}_t$-measurable for every $t$ and $n$. 
The rough path continuity gives that there exists a subsequence (still denoted $X^n$) such that $X_t^n \rightarrow X_t$ for every $t$ on a set of full measure. The result follows since measurability is closed under limits. 
\end{proof}

Note that since the construction $\tilde{\Z}(B)(\omega) \rightarrow X$ is deterministic, the law of $X$ is completely determined by the law of the Brownian motion.

\bigskip

We now proceed to prove existence of a weak solution.

\begin{theorem} \label{thm:RSDEexistence}
Assume $u$ is a bounded function, $\beta_j \in C^3_b(\R^d)$ for all $j$ and $\Z$ is a geometric rough path. Then there exists a weak solution to \eqref{MainEq}.
\end{theorem}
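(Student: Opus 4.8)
The plan is to follow the equivalent-measure strategy laid out in Section \ref{section:formalComputation}, now made rigorous using the joint lift of Section \ref{section:JointLift} and the pathwise solvability established in Proposition \ref{proposition}. Concretely: start with a probability space $(\Omega, \mathcal{F}, P)$ carrying a Brownian motion $(\tilde B_t, \mathcal{F}_t)_{t\in[0,T]}$; apply Proposition \ref{proposition} to obtain an $\mathcal{F}_t$-adapted process $X$ solving the pure rough equation $dX_t = V_j(X_t)\, d\tilde{\Z}(\tilde B)^j_t$ with $X_0 = x$, in the sense of Definition \ref{def:RPsolution}, on a set $\Omega_0$ of full $P$-measure. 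Then define the exponential martingale
\begin{equation*}
\frac{dQ}{dP} := \exp\left( \int_0^T u_j(t,X_t)\, d\tilde B^j_t - \frac12 \int_0^T |u(t,X_t)|^2\, dt \right),
\end{equation*}
which is a genuine probability measure since $u$ is bounded (so Novikov's condition holds trivially). By Girsanov's theorem, $B_t := \tilde B_t - \int_0^t u(s,X_s)\, ds$ is a Brownian motion on $(\Omega,\mathcal{F},Q)$.

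The crux of the argument — and the step I expect to be the main obstacle — is showing that, after the change of measure, $X$ solves \eqref{eq:RoughSDE} with the \emph{new} Brownian motion $B$, i.e. that $X$ is a weak solution in the precise sense of the definition. The subtlety is that the joint lift $\tilde{\Z}(B)$ appearing in the target equation is built from $B$, not from $\tilde B$, so one must relate $\tilde{\Z}(\tilde B)$ and $\tilde{\Z}(B)$. Since $B$ and $\tilde B$ differ by the absolutely continuous (indeed bounded-variation, bounded-derivative) drift $\int_0^\cdot u(s,X_s)\,ds$, the iterated-integral entries differ only by cross terms involving this drift against $Z$ and against $\tilde B$, all of which are of the form $\int_s^t(\cdots)\,dr$ and hence contribute terms that are $O(|t-s|)$ or better — in particular they do not disturb the $C^\alpha_2 \times C^{2\alpha}_2$ structure, and the correction to the second-level integral $\int_s^t \delta Z_{sr}\,dB_r$ versus $\int_s^t \delta Z_{sr}\,d\tilde B_r$ is exactly $-\int_s^t \delta Z_{sr} u(r,X_r)\,dr$, a Young-type integral with remainder $\delta$-increment of order strictly above $1$. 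One then writes the remainder $X^\natural$ of Definition \ref{def:RPsolution} for the equation driven by $\tilde{\Z}(B)$ with drift $u$, and checks using the identity $\delta X_{st} - \int_s^t u(r,X_r)\,dr = \int_s^t V_j(X_r)\,d\tilde{\Z}(\tilde B)^j_r$ (which holds from the original equation and the Remark after Definition \ref{def:RPsolution}) together with the drift-correction computation that the resulting two-index object is indeed a remainder, i.e. $|X^\natural_{st}| \lesssim |t-s|^\zeta$ for some $\zeta > 1$. I would isolate this computation as a short lemma expanding the second-level term, mirroring the expansion referenced as \eqref{eq:expansion} in the earlier remark.

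Two smaller points need attention along the way. First, one must verify that the stochastic integral $\int_0^T u_j(t,X_t)\,d\tilde B^j_t$ defining $dQ/dP$ genuinely makes sense: this is where the adaptedness clause of Proposition \ref{proposition} is used — $X$ has an $\mathcal{F}_t$-adapted modification, $u(\cdot,X_\cdot)$ is then progressively measurable and bounded, so the Itô integral is well-defined and the exponential is a true martingale. Second, one should note that because $Q \sim P$ are mutually absolutely continuous, the $P$-full-measure set $\Omega_0$ on which $\tilde{\Z}(B)$ is a rough path and on which the rough equation holds is also $Q$-full-measure, so the pathwise solution property transfers to $(\Omega,\mathcal{F},Q)$ without loss. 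Assembling these pieces, $(\Omega,\mathcal{F},Q)$ with the Brownian motion $B$ and the process $X$ is the desired weak solution, which completes the proof; the argument is essentially a careful bookkeeping of how the bounded-variation drift interacts with the two levels of the joint rough lift.
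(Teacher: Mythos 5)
Your proposal is correct and follows essentially the same route as the paper: solve the driftless rough equation for $\tilde{\Z}(\tilde B)$ via Proposition \ref{proposition}, perform the Girsanov change of measure using the adaptedness of $X$, and then verify that the only change in the joint lift is the correction $\int_s^t \delta Z_{sr}\,dB_r - \int_s^t \delta Z_{sr}\,d\tilde B_r = -\int_s^t \delta Z_{sr}\,u(r,X_r)\,dr = O(|t-s|^{1+\alpha})$, which is absorbed into the remainder. This is precisely the paper's argument, including the identification of the second-level correction as the crux.
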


\begin{proof}

Let $(\Omega, \mathcal{F},P)$ be a probability space with Brownian motion $(\tilde{B}_t, \mathcal{F}_t)_{t \in [0,T]}$. Denote by $X$ the $\mathcal{F}_t$-adapted stochastic process solving \eqref{eq:PureRoughEq}, i.e.
\begin{align}
\delta X_{st} & = V_j(X_s) \delta \tilde{Z}(\tilde{B})_{st}^j + \nabla V_j(X_s) V_i(X_s) \delta \tilde{\ZZ}(\tilde{B})_{st}^{i,j} + X_{st}^{\natural} \notag \\
 & = \delta \tilde{B}_{st} + \beta_j(X_s) \delta Z_{st}^j  + \nabla \beta_j(X_s) \beta_i(X_s) \ZZ_{st}^{i,j} + \nabla \beta_j(X_s) e_i \int_s^t \delta \tilde{B}_{sr}^i dZ_r^j + X_{st}^{\natural} \label{eq:expansion}
\end{align}
where $X^{\natural}$ is remainder. 

Define now 
\begin{equation} \label{eq:NewBM}
B_t := - \int_0^t u(s,X_s) ds + \tilde{B}_t
\end{equation}
and the measure
$$
dQ = \exp \left( \int_0^T u_j(s,X_s) d \tilde{B}^j_s - \frac12 \int_0^T |u(s,X_s)|^2 ds \right) dP,
$$
so that $B$ is Brownian motion w.r.t. $(\Omega, \mathcal{F},Q)$. Now, there exists a set $\Omega_1$ of full $Q$-measure such that $\tilde{\Z}(B)$ is a rough path. Since $Q$ and $P$ are equivalent, $\Omega_1$ also has full $P$-measure, and we have
$$
\int_s^t \delta Z_{sr} dB_r = \int_s^t \delta Z_{sr} d\tilde{B}_r - \int_s^t \delta Z_{sr} u(r,X_r)dr .
$$
By the boundedness of $u$ we have
$$
\left| \int_s^t \delta Z_{sr} dB_r -  \int_s^t \delta Z_{sr} d\tilde{B}_r \right| \leq [Z]_{\alpha} \|u\|_{\infty} |t-s|^{1 + \alpha}
$$
for every $\omega$ such that both stochastic integrals are well defined. 

Consequently, plugging \eqref{eq:NewBM} into \eqref{eq:expansion} we get
\begin{align}
\delta X_{st} & =  \int_s^t u(r,X_r) dr + \delta B_{st} + \beta_j(X_s) \delta Z_{st}^j  + \nabla \beta_j(X_s) \beta_i(X_s) \ZZ_{st}^{i,j} \\
 & \qquad + \nabla \beta_j(X_s) e_i \int_s^t \delta B_{sr}^i dZ_r^j + \tilde{X}_{st}^{\natural}  
\end{align}
where we have defined 
\begin{align*}
\tilde{X}_{st}^{\natural} = X_{st}^{\natural} + \nabla \beta_j(X_s) e_i \left(  \int_s^t \delta \tilde{B}_{sr}^i dZ_r^j -  \int_s^t \delta B_{sr}^i dZ_r^j \right).
\end{align*}
Now, by definition 
\begin{align*}
\int_s^t \delta \tilde{B}_{sr}^i dZ_r^j   -  \int_s^t \delta B_{sr}^i dZ_r^j & =  (\delta \tilde{B}_{st}^i - \delta \tilde{B}_{st}^i) \delta Z_{st}^j +  \int_s^t \delta Z^j_{sr} dB^i_r -  \int_s^t \delta Z^j_{sr} d\tilde{B}^i_r \\
& = - \int_s^t u(r,X_r)^i dr  \delta Z_{st}^j +  \int_s^t \delta Z_{sr}^i u(r,X_s)^j dr  
\end{align*}
so that $\tilde{X}^{\natural}$ is remainder. This proves the result.
\end{proof}


\subsection{Uniqueness} 

Finally, we show that solutions to \eqref{MainEq} are unique in law. The proof goes as follows.

Assume we have
$$
dX_t = u(t,X_t)dt + dB_t + \beta_j(X_t) d\Z_t^j, \quad X_0 = x \in \R^d
$$
and define the measure $dQ = \exp \left( - \int_0^T u_j(s,X_s) dB^j_s - \frac12 \int_0^T |u(s,X_s)|^2 ds \right) dP$. Then we know that the process  $dY_t := u(t,X_t)dt + dB_t$ is a $Q$ Brownian motion, 
and if we can show that 
$$
dX_t = dY_t + \beta_j(X_t) d\Z_t^j = V_j(X_t) d\tilde{\Z}(Y)_t^j, \quad X_0 = x \in \R^d
$$
the result follows since the solution $X$ is constructed in a pathwise sense from $\tilde{\Z}(Y)$. 

\begin{theorem}
The solution constructed in Theorem \ref{thm:RSDEexistence} is unique in law.
\end{theorem}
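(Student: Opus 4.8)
The plan is to show that the law of \emph{every} weak solution of \eqref{MainEq} is pinned down by $u$, the vector fields $\beta_j$ and the rough path $\Z$, so that in particular it coincides with the law of the solution produced in Theorem \ref{thm:RSDEexistence}. Let $(\Omega,\mathcal{F},P)$ carry a Brownian motion $(B_t,\mathcal{F}_t)_{t\in[0,T]}$ and an $\mathcal{F}_t$-adapted process $X$ which is a weak solution. Since $u$ is bounded and $X$ is adapted, $u(\cdot,X)$ is a bounded adapted process, so
$$
dQ := \exp\left( -\int_0^T u_j(s,X_s)\,dB_s^j - \tfrac12\int_0^T |u(s,X_s)|^2\,ds \right) dP
$$
defines a probability measure with $Q\sim P$, and by Girsanov $Y_t := B_t + \int_0^t u(s,X_s)\,ds$ is a Brownian motion on $(\Omega,\mathcal{F},Q)$.

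The first step is to check that, on a set of full $Q$-measure, $X$ solves the driftless rough equation $dX_t = V_j(X_t)\,d\tilde{\Z}(Y)_t^j$ in the sense of Definition \ref{def:RPsolution}. Because $Y$ is a $Q$-Brownian motion, Section \ref{section:JointLift} (applied on $(\Omega,\mathcal{F},Q)$) provides a full-$Q$-measure set on which $\tilde{\Z}(Y)$ is a genuine rough path; intersecting it with the full-measure set carrying the defining expansion of $X$ driven by $\tilde{\Z}(B)$, one runs the remainder bookkeeping of the proof of Theorem \ref{thm:RSDEexistence} in reverse. Concretely $\delta Y_{st} = \int_s^t u(r,X_r)\,dr + \delta B_{st}$, and since $u$ is bounded the difference $\int_s^t \delta B_{sr}^i\,dZ_r^j - \int_s^t \delta Y_{sr}^i\,dZ_r^j$ is of order $|t-s|^{1+\alpha}$, so converting the $\tilde{\Z}(B)$-expansion into the $\tilde{\Z}(Y)$-expansion only changes the remainder by remainder-order terms. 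This is a direct transcription of the existence computation and should be routine.

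Next, Proposition \ref{proposition} (equivalently Theorem \ref{thm:RDEwellPosed}) gives uniqueness for the pathwise equation $dX_t = V_j(X_t)\,d\tilde{\Z}(Y)_t^j$, and, as noted in the remark after Proposition \ref{proposition}, its solution is a deterministic measurable functional of the driving Brownian path: $X = \Psi(Y)$ $Q$-a.s., with $\Psi$ depending only on $(\beta_j,\Z)$. To return to $P$, rewrite the density via $dB_t = dY_t - u(t,X_t)\,dt$, giving $Q$-a.s.
$$
\frac{dP}{dQ} = \exp\left( \int_0^T u_j(s,X_s)\,dY_s^j - \tfrac12\int_0^T |u(s,X_s)|^2\,ds \right).
$$
Substituting $X=\Psi(Y)$ and using that the It\^{o} integral of a bounded adapted functional of a Brownian motion is a measurable functional of that Brownian path, there is a fixed measurable $G$, built only from $u$ and $\Psi$, with $\tfrac{dP}{dQ}=G(Y)$ $Q$-a.s. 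Hence for every bounded measurable $F$ on $C([0,T];\R^d)$,
$$
E_P[F(X)] = E_Q\big[ G(Y)\,F(\Psi(Y)) \big] = E_{\mathcal{W}}\big[ G(W)\,F(\Psi(W)) \big],
$$
$W$ being a Brownian motion with Wiener law $\mathcal{W}$; the last expression is the same for every weak solution, which is the assertion.

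I expect the main obstacle to be the two identities $X=\Psi(Y)$ and $\tfrac{dP}{dQ}=G(Y)$: one has to make precise that the joint lift $\tilde{\Z}(Y)$, the rough-path solution map, and the stochastic integral in the Girsanov density are all bona fide measurable functionals of the single Brownian path $Y$ whose form is dictated by Wiener measure alone — so that they transfer unchanged between probabilistic set-ups — and that the various almost-sure identities can be arranged to hold on one common full-measure set. The remainder manipulation in the first step, by contrast, introduces nothing beyond what is already carried out for Theorem \ref{thm:RSDEexistence}.
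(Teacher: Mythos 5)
Your proposal is correct and follows essentially the same route as the paper: apply Girsanov so that $Y_t = B_t + \int_0^t u(s,X_s)\,ds$ becomes a $Q$-Brownian motion, redo the remainder bookkeeping to show $X$ solves the driftless rough equation driven by $\tilde{\Z}(Y)$, and conclude that $X$ is a deterministic functional of $Y$ so that $E_P[F(X)]$ is expressed through Wiener measure alone. You are, if anything, slightly more explicit than the paper about why both $dP/dQ$ and $X$ are measurable functionals of the single Brownian path $Y$.
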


\begin{proof}
By assumption we have 
\begin{align}
\delta X_{st} & =  \int_s^t u(r,X_r) dr + \delta B_{st} + \beta_j(X_s) \delta Z_{st}^j  + \nabla \beta_j(X_s) \beta_i(X_s) \ZZ_{st}^{i,j} \\
 & \qquad + \nabla \beta_j(X_s) e_i \int_s^t \delta B_{sr}^i dZ_r^j + X_{st}^{\natural} .  
\end{align}
Define the measure $dQ = \exp \left( - \int_0^T u_j(s,X_s) dB^j_s - \frac12 \int_0^T |u(s,X_s)|^2 ds \right) dP$. Then we know that the process  $dY_t := u(t,X_t)dt + dB_t$ is a $Q$ Brownian motion, and we have
$$
 - \int_0^T u_j(s,X_s) dB^j_s - \frac12 \int_0^T |u(s,X_s)|^2 ds  =  \int_0^T u_j(s,X_s) dY^j_s - \frac12 \int_0^T |u(s,X_s)|^2 ds  .
$$
Similarly as before, we get
\begin{align}
\delta X_{st} & =   \delta Y_{st} + \beta_j(X_s) \delta Z_{st}^j  + \nabla \beta_j(X_s) \beta_i(X_s) \ZZ_{st}^{i,j} \\
 & \qquad + \nabla \beta_j(X_s) e_i \int_s^t \delta Y_{sr}^i dZ_r^j + \tilde{X}_{st}^{\natural} ,  
\end{align}
where 
$$
\tilde{X}^{\natural}_{st} := \nabla \beta_j(X_s) e_i \left( \int_s^t \delta B_{sr}^i dZ_r^j -   \int_s^t \delta Y_{sr}^i dZ_r^j \right)
$$ 
which shows that $X  = X^Y$ is the solution to \eqref{eq:PureRoughEq} driven by $Y$ on some set of full measure. Since the construction of the solution of \eqref{eq:PureRoughEq} is purely deterministic, the law of $X^Y$ depends only on the law of $Y$. Consequently,
\begin{align*}
E_P[F(X_{\cdot})] & = E_Q \left[ F(X^Y_{\cdot})  \exp \left( \int_0^T u_j(s,X_s^Y) dY^j_s - \frac12 \int_0^T |u(s,X_s^Y)|^2 ds \right) \right] 
\end{align*}
for any bounded and measurable function $F: C([0,T];\R^d) \rightarrow \R$ which shows uniqueness in law.
\end{proof}


\end{document}